\theoremstyle{plain}
\newtheorem{theorem}{Theorem}[section]
\newtheorem*{theorem*}{Theorem}
\newtheorem{prop}[theorem]{Proposition}
\newtheorem*{mt*}{Main Theorem}
\newcommand\g{{\mathfrak{g}}}
\newcommand\h{{\mathfrak{h}}}
\newcommand\su{{\mathfrak{su}}}
\newcommand\m{{\mathfrak{m}}}
\renewcommand\t{{\theta}}
\newcommand{\D}{\mathcal D}
\newcommand{\A}{\mathcal A}
\begin{document}
\title[Some geometrical properties of the Oscillator group]{Some geometrical properties of the Oscillator group}
\author{Y. Aryanejad}
\date{}

\address{Department of Mathematics\\Payame noor University\\P.O. Box 19395-3697\\Tehran\\Iran.}
\email{Yadollah Aryanejad: y.aryanejad@pnu.ac.ir}
\subjclass[2000]{53C50, 53C15, 53C25}
\keywords{Oscillator group, Ricci solitons, Harmonic vector fields, Harmonic maps}

\begin{abstract}
We consider the oscillator group equipped with
a biinvariant Lorentzian metric, and then
 some geometrical properties of this group i.e. homogeneous
Ricci solitons and harmonicity properties of invariant vector fields are obtained. We also determine all vector fields which are critical points for the energy functional
restricted to vector fields. Vector fields defining harmonic maps are also determined, and the energy of critical points is explicitly calculated.
\end{abstract}

\maketitle

\section{Introduction}
The oscillator group is defined as the semidirect product of the line (time) with the Heisenberg group, with the action given by the dynamics. Thus, it is not realised as a matrix group. It is a solvable group, but not an exponential group. The oscillator groups, called Warped Heisenberg Lie groups in \cite{Z1}, are not only important in Lorentzian geometry
but also have interesting applications in Conformal Field Theory, in WZWmodels
(see \cite{N1}) and Supergravity.
This group has many properties useful both in geometry and physics. To mention but two
geometrical applications, Medina \cite{Mn} proved that Os is the only four-dimensional non-Abelian simply connected
solvable Lie group, which admits a bi-invariant Lorentzian metric; and Console, Ovando,
and Subils \cite{Cn} obtain solvable models for Kodaira surfaces by using suitable lattices on the oscillator
group.
 Moreover, it is \cite{Ln} an example of
homogeneous spacetime, which as causal space satisfies the so-called causal continuity.
Levichev studied in \cite{Ln1} the oscillator group with the biinvariant Lorentzian metric,
which geometrically is a Lorentzian
symmetric space and phisically is related to an isotropic electromagnetic field.\\
The oscillator group has interesting features from the viewpoints of both Differential Geometry
and Physics (see, for example, \cite{Cn1}, \cite{Dn} and the references therein). 
On the other hand, In paper \cite{on}, Onda has constructed Lorentzian algebraic Ricci solitons on the oscillator groups $G_m(\lambda)$. In particular, he obtained new Lorentzian Ricci solitons $G_m(\lambda)$ which in compare with our result is wrong.\\
 A natural generalization of an Einstein manifold is Ricci soliton, i.e.   
a pseudo Riemannian metric $g$ on a smooth manifold $M$, such that the equation
\begin{eqnarray}\label{ric}
\begin{array}{cccc}
\mathcal{L}_{X} g =\varsigma g-\varrho,
\end{array}
\end{eqnarray}
holds for some $\varsigma \in R$ and some smooth vector field $X$ on $M$, where $\varrho$
denotes the Ricci tensor of $(M, g)$ and $\mathcal{L}_{X}$ is the usual Lie derivative.
 According
to whether $\varsigma > 0, \varsigma = 0$ or $\varsigma < 0$ a Ricci soliton $g$ is said to be shrinking, steady or expanding, respectively. A homogeneous Ricci soliton on a homogeneous space $M = G/H$ is a G-invariant metric $g$ for which the equation \eqref{ric} holds and
an invariant Ricci soliton is a homogeneous apace, such that equation \eqref{ric} is satisfied by an invariant vector field.
Indeed, the study of Ricci solitons homogeneous spaces is an interesting area
of research in pseudo-Riemannian geometry.
For example, evolution of homogeneous Ricci solitons under the bracket flow \cite{Lm}, algebraic solitons and the Alekseevskii Conjecture properties\cite{LM}, conformally flat Lorentzian gradient Ricci
solitons\cite{MB}, properties of algebraic Ricci Solitons of three-dimensional Lorentzian
Lie groups\cite{BA}, algebraic Ricci solitons \cite{Ba}. Non-K\"{a}hler examples of Ricci solitons are very hard to find yet (see \cite{DH}).
In case $(G, g)$ be a simply-connected completely solvable Lie
group equipped with a left-invariant metric, and $(\g,\langle,\rangle )$ be the corresponding
metric Lie algebra, then $(G, g)$ is a Ricci soliton if and only if $(\g,\langle ,\rangle)$ is an
algebraic Ricci soliton \cite{LJ}. \\
Up to our knowledge, no geometrical properties such as harmonicity properties of invariant vector fields have been obtained yet for the oscillator group.
Investigating critical points of the energy associated to vector fields is an interesting purpose
under different points of view. As an example by the
Reeb vector field $\xi$ of a contact metric manifold, somebody can see how the criticality of such a vector field is related to the geometry of the manifold (\cite{p1},\cite{p2}).
Recently, it has been \cite{g3} proved that critical points of $E:\mathfrak{X}(M)\rightarrow R$, that is,
the energy functional restricted to vector fields, are again parallel vector fields. Moreover,
in the same paper it also has been determined the tension field associated to a unit vector field $V$, and investigated the problem of determining when $V$ defines a harmonic map. \\
A Riemannian manifold admitting a parallel vector field is locally reducible, and the same is true for a
pseudo-Riemannian manifold admitting an either space-like or time-like parallel vector field. This leads us to consider different situations, where some interesting types of non-parallel vector fields can be characterized in terms of harmonicity properties. We may refer
to the recent monograph \cite{d1} and some references \cite{i}, \cite{n} for an overview on harmonic vector fields.\\
As for the contents, in Section 2,we give some preliminaries.
In Section 3, we investigate rquired conditions for oscillator group Ricci solitons. Harmonicity properties
of vector fields on oscillator group will be determined
in Sections 4. Finally, the energy of all these vector fields is explicitly calculated in Section 5.
\section{preliminaries}
Let $M = G/H$ be a homogeneous manifold (with $H$ connected), $\g$ the Lie algebra
 of $G$ and $\h$ the isotropy subalgebra. Consider $\m =\g /\h$ the factor space, which identies
with a subspace of $\g$ complementary to $\h$. The pair $(\g,\h)$ uniquely defines the isotropy
representation
\begin{center}
$\psi :\g \longrightarrow \mathfrak{gl}(\m),\quad      \psi(x)(y)=[x,y]_\m$   
\end{center}
for all  $x\in \g, y\in \m$. Suppose that $\lbrace e_1,...,e_r,u_1,...,u_n\rbrace$ be a basis of $\g$, where $\lbrace e_j\rbrace$ and $\lbrace u_i\rbrace$ are bases of $\h$ and $\m$
respectively, then with respect to $\lbrace u_i \rbrace$, $H_j$ whould be the isotropy representation  for $e_j$.
$g$ on $\m$ uniquely defines its invariant linear Levi-Civita connection, as the corresponding homomorphism of $\h$-modules  $\Lambda:\g \longrightarrow \mathfrak{gl}(\m)$ such that $\Lambda(x)(y_\m)=[x,y]_\m$ for all  $x\in \h, y\in \g$. In other word
\begin{eqnarray}\label{con}
\begin{array}{cccc}
\Lambda(x)(y_\m)=\frac{1}{2}[x,y]_\m+v(x,y)
\end{array}
\end{eqnarray}
for all  $x,y\in \g$, where $v:\g \times \g\rightarrow \m$ is the $\h$-invariant symmetric mapping uniquely determined by
\begin{center}
$2g(v(x, y), z_\m) = g(x_\m, [z,y]_\m) + g(y_\m,[z,x]_\m)$
\end{center}
for all  $x,y,z\in \g$,
Then the curvature tensor can be determined by
\begin{eqnarray}\label{curve}
R:\m \times \m \longrightarrow \mathfrak{gl}(\m),\quad & R(x,y)=[\Lambda(x),\Lambda(y)]-\Lambda ([x,y]),   
\end{eqnarray}
and with respect to $u_i$, the Ricci tensor $\rho$ of $g$  is given by
\begin{eqnarray}\label{ric2}
\rho (u_i ,u_j)=\sum_{k=1}^4g(R(u_k,u_i)u_j,u_k),\quad i,j=1,\dots,4.
\end{eqnarray}
Let $(M,g)$ be a compact Riemannian manifold and $g_s$ be the Sasaki metric
on the tangent bundle $TM$, then the energy of a smooth vector field $V:(M,g)\longrightarrow
(TM,g^s)$ on $M$ is;
\begin{equation}\label{enr}
E(V)=\dfrac{n}{2}vol (M,g)+\dfrac{1}{2}\int_M ||\nabla V||^2dv
\end{equation}
(assuming $M$ compact; in the non-compact case, one works over relatively compact domains see \cite{c1}). If $V:(M,g)\longrightarrow (TM,g^s)$ be a critical point for the energy functional, then $V$ is said to define a harmonic map. The Euler-Lagrange equations characterize vector fields $V$ defining harmonic maps as the ones whose tension field $\t(V)=tr(\nabla^2V)$ vanishes.
 Consequently,
$V$ defines a harmonic map from $(M,g)$ to $(TM,g^s)$ if and only if
\begin{equation}\label{hor}
 tr[R(\nabla_. V,V)_.]=0, \quad  \nabla^*\nabla V=0,
\end{equation}
where with respect to an orthonormal local frame $\lbrace e_1,...,e_n\rbrace$ on $(M,g)$, with $\varepsilon_i=g(e_i,e_i)=\pm1$ for all indices i,
one has
\begin{center}
$ \nabla^*\nabla V=\sum_i \varepsilon_i( \nabla_{e_i}\nabla_{e_i} V-\nabla_{\nabla_{e_i}e_i}V)$.   
\end{center}
A smooth vector field V is said to be a harmonic
section if and only if it is a critical point of $E^v(V)=(1/2)\int_M||\nabla V||^2dv$ where $E^v$ is the vertical energy. The corresponding Euler-Lagrange equations are given by
\begin{equation}
 \nabla^*\nabla V=0,
\end{equation}
Let $\mathfrak{X}^{\rho}(M) =\lbrace V\in \mathfrak{X}(M): ||V||^2=\rho^2 \rbrace$ and $\rho\neq 0$. Then, one can consider vector fields $ V\in \mathfrak{X}(M)$
which are critical points for the energy functional $E
|_{\mathfrak{X}^{\rho}(M)}$, restricted to vector fields of the same constant length. The
Euler-Lagrange equations of this variational condition are given by
\begin{equation}\label{hor1}
\nabla^*\nabla V\quad is\quad collinear\quad to\quad V.   
\end{equation}
As usual, condition \eqref{hor1} is taken as a definition
of critical points for the energy functional restricted to vector fields of the same length in the non-compact case.

\section{Homogeneous Ricci solitons on oscillator group}
We consider the basis $\lbrace P,X_1,Y_1,Q \rbrace$ of the Lie
algebra $\g$ with brackets
\begin{equation}\label{lie8}
[X_1,Y_1]=P,\quad [Q,X_1]=Y_1,\quad   [Q,Y_1]=-X_1.
 \end{equation}
 The corresponding simply connected Lie group $G$ is called the oscillator group.
Consider the biinvariant Lorentzian metric $g$ on the oscillator group $G$ given in the basis $\lbrace P,X_1,Y_1,Q \rbrace$, by
\begin{equation}\label{meter}
g= \left( \begin{array}{cccc}
   0 & 0 & 0 & 1   \\
   0 & 1 & 0  & 0 \\
   0 & 0 & 1  &0 \\
    1 & 0 & 0  &0
   \end{array}  \right).
\end{equation}
The components of the Levi-Civita connection are calculated using the well known {\em Koszul} formula and are
\begin{equation}
\begin{array}{cccc}
 \Lambda_1=0, &\quad \quad
 \Lambda_2= \left( \begin{array}{cccc}
  0 & 0 & \frac{1}{2} & 0   \\
   0 & 0 & 0  & 0 \\
   0 & 0 & 0  & - \frac{1}{2} \\
    0 & 0 & 0  &0
   \end{array}  \right), &\\
   
    \Lambda_3= \left( \begin{array}{cccc}
    0 & 0 & -\frac{1}{2} & 0   \\
   0 & 0 & 0  & 0 \\
   0 & 0 & 0  & \frac{1}{2} \\
    0 & 0 & 0  &0 
   \end{array}  \right), &\quad \quad 
   \Lambda_4= \left( \begin{array}{cccc}
   0 & 0 & 0 & 0   \\
   0 & 0 &  -\frac{1}{2} & 0 \\
   0 & \frac{1}{2} & 0  &  0 \\
    0 & 0 & 0  &0
   \end{array}  \right).
   \end{array}
 \end{equation}
Using \eqref{curve} we can determine the non-zero curvature components;
$$
 \begin{array}{cc}
   R(X_1,Q)X_1=\frac{1}{4} P ,& \quad R(Y_1,Q)Y_1=\frac{1}{4} P, \\
  R(Q,X_1)Q=\frac{1}{4} X_1, & \quad R(Q,Y_1)Q=\frac{1}{4} Y_1.
     \end{array} 
    $$
Since $R(X, Y,Z,W) = g(R(X, Y )Z,W)$ we have;
\begin{equation}\label{ric33}
 \begin{array}{cc}
   R(X_1,Q,X_1, Q)= R(Q,Y_1,Q, Y_1)=\frac{1}{4}.
     \end{array} 
\end{equation}
Applying the Ricci tensor formula \eqref{ric2}, we get;
\begin{equation}\label{ric3}
(\rho_{ij})= \left( \begin{array}{cccc}
   0 & 0 & 0 & 0   \\
   0 & 0 & 0 & 0 \\
   0 & 0 & 0  &  0 \\
    0 & 0 & 0  & \frac{1}{2}
    \end{array}  \right).
\end{equation}
which is diagonal with eigenvalue $r_1=\frac{1}{2}$. For an arbitrary left-invariant vector field $X =aP+bX_1+cY_1+dQ\in \g$ we have;
$$
 \begin{array}{cccc}
  \nabla_{P}X=0 ,& \nabla_{X_1}X= \frac{1}{2}cP- \frac{1}{2}d Y_1,& \\
   \nabla_{ Y_1}X= -\frac{1}{2}bP+ \frac{1}{2}dX_1,&
   \nabla_{Q}X= -\frac{1}{2}cX_1+\frac{1}{2}b Y_1. 
     \end{array}
$$
Although once the metric is fixed and bi-invariant, for an arbitrary left-invariant vector field $ \mathcal{L}_{X}g=0$, using the relation $(\mathcal{L}_{X}g)(Y,Z) = g(\nabla_Y X,Z) + g(Y,\nabla_ZX)$ one concludes that;
 \begin{equation}\label{kil}
  \mathcal{L}_{X}g=0
\end{equation}
So by the Ricci soliton formula \eqref{ric}, we get a system of differential equations including $ \frac{1}{2}=0$ which means that the system of differential equations is incompatible.
Thus, we have the following.
\begin{theorem} 
Let $G$ be  the oscillator group equipped with
biinvariant Lorentzian metric $g$ described in \eqref{meter}, then $G$ can not be a Ricci soliton manifold.
\end{theorem}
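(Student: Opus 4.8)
The plan is to upgrade the computation of the text, which only shows $\mathcal{L}_Xg=0$ for \emph{left-invariant} $X$, into a statement about an arbitrary smooth $X$. First I would exploit that $g$ is bi-invariant, so $(G,g)$ is a Lorentzian symmetric space; hence $\nabla R=0$ and, in particular, the Ricci tensor \eqref{ric3} is parallel, with scalar curvature $s=\operatorname{tr}_g\rho=g^{QQ}\rho_{QQ}=0$ (because $g^{QQ}=0$ for the metric \eqref{meter}). Consequently, for any putative soliton the right-hand side $T:=\varsigma g-\rho$ of \eqref{ric} is a \emph{parallel} symmetric $2$-tensor. Feeding $\nabla(\mathcal{L}_Xg)=\nabla T=0$ into the Koszul-type identity $2g\big((\mathcal{L}_X\nabla)(Y,Z),W\big)=(\nabla_Y\mathcal{L}_Xg)(Z,W)+(\nabla_Z\mathcal{L}_Xg)(Y,W)-(\nabla_W\mathcal{L}_Xg)(Y,Z)$ then forces $\mathcal{L}_X\nabla=0$, i.e. the potential field $X$ must be an infinitesimal affine transformation; such fields are rigid, being determined by their $1$-jet at a point.

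Next I would translate \eqref{ric} into the left-invariant frame. Writing $X=f_1P+f_2X_1+f_3Y_1+f_4Q$ with $f_i\in C^\infty(G)$ and using that $P$ is parallel ($\nabla_UP=0$ for all $U$, as one reads off from the matrices $\Lambda_i$), the identity $(\mathcal{L}_Xg)(U,V)=g(\nabla_UX,V)+g(\nabla_VX,U)$ reduces \eqref{ric} to a first-order system in the $f_i$. The only component that must produce the obstructive $\tfrac12$ of \eqref{ric3} is the null--null one: $(\mathcal{L}_Xg)(Q,Q)=2\,g(\nabla_QX,Q)=2\,(Qf_1)$, so the decisive equation is $Qf_1=-\tfrac14$, while every other component equation has constant or zero right-hand side. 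The remaining equations couple the $f_i$ through the brackets $[X_1,Y_1]=P$, $[Q,X_1]=Y_1$, $[Q,Y_1]=-X_1$, so the genuine content is the \emph{compatibility} of this overdetermined system, to be tested by commuting the frame derivatives.

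I expect the main obstacle to be exactly this compatibility analysis, and I do not expect it to go the way the statement asks. The offending equation $Qf_1=-\tfrac14$ is solved by $f_1=-\tfrac14\,t$, where $t$ is the coordinate with $Q=\partial_t$ (and $P=\partial_z$); taking $f_2=f_3=f_4=0$ one checks directly that $\nabla_UX=(Uf_1)P$, which kills every component of $\mathcal{L}_Xg$ except $(\mathcal{L}_Xg)(Q,Q)=-\tfrac12$, so that $\mathcal{L}_Xg=-\rho$. Hence the \emph{non-invariant} field $X=-\tfrac14\,t\,P$ already satisfies \eqref{ric} with $\varsigma=0$. The point is that the null direction is invisible both to the trace (since $g^{QQ}=0$ gives $s=0$) and to each individual component, so the inconsistency ``$\tfrac12=0$'' of the invariant computation is an artifact of restricting to left-invariant $X$ and does not survive for general $X$.

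For this reason the theorem as worded cannot be obtained by extending the invariant calculation, and the obstacle is substantive rather than technical. The natural repair is to impose a hypothesis on the potential field: if $X$ is required to be left-invariant (the \emph{invariant} soliton notion of Section~2), then $\mathcal{L}_Xg=0$ while $\rho\neq\varsigma g$, and the argument of the text closes. My proposed proof therefore establishes the sharp statement that $(G,g)$ admits no \emph{invariant} Ricci soliton, whereas as a steady Ricci soliton it does satisfy \eqref{ric} via the non-invariant potential $X=-\tfrac14\,t\,P$; the decisive step is precisely the realization that this null-direction field solves the equation.
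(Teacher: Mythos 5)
Your diagnosis is correct, and the decisive computation checks out. The paper's own proof of this theorem is precisely the invariant calculation you describe as the salvageable part: bi-invariance makes every left-invariant $X$ Killing, so $\mathcal{L}_Xg=0$, and the $(Q,Q)$-component of \eqref{ric} then forces $\tfrac12=0$ because $g_{QQ}=0$ while $\rho_{QQ}=\tfrac12$ by \eqref{ric3}. Nothing in the paper addresses non-invariant potentials, so the paper in fact proves only the sharp statement you isolate — $(G,g)$ admits no \emph{invariant} Ricci soliton — while asserting the stronger one. Your counterexample survives scrutiny and can be made airtight with one observation: since none of the brackets \eqref{lie8} has a $Q$-component, the left-invariant $1$-form $\omega_Q$ dual to $Q$ is closed, hence exact on the simply connected $G$, say $\omega_Q=dt$; taking $f_1=-\tfrac14 t$ one gets $\nabla_U(f_1P)=(Uf_1)P$ because $P$ is parallel (visible from the displayed covariant derivatives, and the content of the paper's Proposition 3.5; note also that the parallel null field $u=e_2-e_4$ of Section 4 \emph{is} $P$), whence $\mathcal{L}_{f_1P}g=-\tfrac12\,\omega_Q\otimes\omega_Q=-\rho$ and \eqref{ric} holds with $\varsigma=0$. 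So the theorem is false as stated: $(G,g)$ is a steady Ricci soliton with non-invariant potential, and no argument extending the paper's could close the gap you name.

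Your reading also resolves the dispute with Onda that follows the theorem: the paper's matrices \eqref{D} and \eqref{Rc} differ only by a transposition. The text itself stipulates $DQ=\tfrac12P$ with $D$ vanishing on $P,X_1,Y_1$; written in the same convention as \eqref{Rc}, this $D$ \emph{equals} \eqref{Rc}, and it is a derivation of \eqref{lie8} because $P$ is central, so $Rc=0\cdot\mathrm{Id}+D$ and \eqref{algric} holds — your $X=-\tfrac14\,t\,P$ is exactly the potential field this algebraic soliton generates. Two minor remarks on your write-up: the affine-rigidity detour in your first paragraph ($\nabla T=0\Rightarrow\mathcal{L}_X\nabla=0$) is sound but never used in your argument, and your computation $s=\operatorname{tr}_g\rho=0$ is correct (it incidentally contradicts the value $\tau=\tfrac12$ used in the paper's Proposition 3.3, which traces $\rho$ without the metric) but likewise inessential. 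The essential step is the one you flag: the null direction $P$ makes the obstruction $\rho_{QQ}=\tfrac12$ invisible to $g$, and a linearly growing multiple of the parallel null field absorbs it.
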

As we already mentioned by theorem 2.5 in \cite{on} this result contradicts theorem 4.1 in \cite{on}. An algebraic Ricci soliton in this sense is a pseudo-Riemannian 
$(M,g)$ metric
satisfying
\begin{equation}\label{algric}
Rc = cId + D
\end{equation}
where $Rc$ denotes the Ricci operator, $c$ is a real number, and $D \in Der(\g)$. Under
the hypothesis of theorem 4.1 in \cite{on}, set $m=\lambda_1=1$ and $\epsilon=0$. So, we have lie algebra $\g$ and lorentzian metric $g$ described in \eqref{lie8} and \eqref{meter} repectively. By formula 3.3 in \cite{on}, the derivation $D$ must be as following
$$
D= \left( \begin{array}{cccc}
   D_1^1 & D_2^1 & D_3^1 & D_4^1   \\
   D_1^2 & D_2^2 & D_3^2 & D_4^2 \\
   D_1^3 & D_2^3 & D_3^3 & D_4^3 \\
    D_1^4 & D_2^4 & D_3^4 & D_4^4
     \end{array}  \right),
$$
where by Eq(4.5) in \cite{on}, $DQ= \mu P=\frac{1}{2} P$ and hence $D_1^4=\mu=\frac{1}{2}$. It means that 
\begin{equation}\label{D}
D= \left( \begin{array}{cccc}
   0 & 0 & 0 & 0   \\
   0 & 0 & 0 & 0 \\
   0 & 0 & 0  &  0 \\
    \frac{1}{2} & 0 & 0  & 0
    \end{array}  \right).
\end{equation}
Also using Eq(4.5) in \cite{on}, the Ricci operator $Rc$ is given by
\begin{equation}\label{Rc}
Rc= \left( \begin{array}{cccc}
   0 & 0 & 0 & \frac{1}{2}   \\
   0 & 0 & 0 & 0 \\
   0 & 0 & 0  &  0 \\
    0 & 0 & 0  & 0
    \end{array}  \right).
\end{equation}    
As we can see $Rc\neq D$, therefore $DQ= \mu P=\frac{1}{2} P$ is wrong and theorem 4.1 in \cite{on} can not be true.
By \eqref{ric3}, $\rho_{ij}\neq\lambda g_{ij}$ for all indices i, j, so, we proved the following result too.
\begin{prop} 
Let $G$ be  the oscillator group equipped with
biinvariant Lorentzian metric $g$ described in \eqref{meter}, then $G$ can not be an Einstein manifold.
\end{prop}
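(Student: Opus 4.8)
The plan is to use the standard characterization that a pseudo-Riemannian manifold $(M,g)$ is Einstein precisely when its Ricci tensor is proportional to the metric, that is $\rho = \lambda g$ for some constant $\lambda \in \mathbb{R}$; equivalently, the associated Ricci operator $Rc$, defined by $\rho(X,Y)=g(Rc\,X,Y)$, is a scalar multiple of the identity. Since the metric $g$ is bi-invariant and the Ricci tensor $(\rho_{ij})$ has already been computed explicitly in \eqref{ric3}, the whole argument reduces to comparing two $4\times 4$ matrices, and no further geometric input is required.

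First I would argue by contradiction, assuming that $G$ is Einstein, so that $\rho_{ij}=\lambda g_{ij}$ holds for every pair of indices and some fixed $\lambda$. I would then read off the decisive component. In the basis $\lbrace P, X_1, Y_1, Q\rbrace$ the metric \eqref{meter} has $g_{44}=g(Q,Q)=0$, whereas \eqref{ric3} gives $\rho_{44}=\tfrac{1}{2}$. The Einstein condition on this single entry forces $\tfrac{1}{2}=\lambda\cdot 0=0$, which is impossible for any real $\lambda$. This alone rules out the Einstein property, and it is exactly the assertion $\rho_{ij}\neq \lambda g_{ij}$ announced just before the statement.

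As an alternative packaging of the same obstruction, I would compare the off-diagonal entry instead: since $g_{14}=g(P,Q)=1$ while $\rho_{14}=0$, the relation $\rho=\lambda g$ would first force $\lambda=0$, and then $\rho_{44}=\tfrac{1}{2}\neq 0$ again yields a contradiction. Equivalently, one may invoke the Ricci operator from \eqref{Rc} and observe that it is a nonzero nilpotent matrix, hence not diagonalizable, so it cannot coincide with $\lambda\,\mathrm{Id}$ for any scalar $\lambda$.

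I do not expect a genuine obstacle here: all of the substance lies in the curvature and Ricci computations \eqref{curve}--\eqref{ric3} already carried out in this section, which in turn rest on the connection components $\Lambda_i$ and the Koszul formula. The only point needing care is the index bookkeeping for the Lorentzian metric -- in particular remembering that $g_{44}=0$ because $Q$ is a null direction -- so that the single equation $\rho_{44}=\lambda g_{44}$ is correctly recognized as unsolvable.
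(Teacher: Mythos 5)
Your proposal is correct and follows essentially the same route as the paper, which disposes of the statement with the single observation that, by \eqref{ric3}, $\rho_{ij}\neq\lambda g_{ij}$ for any $\lambda$; you merely make that one-liner explicit by isolating the decisive entry $\rho_{44}=\tfrac12$ against $g_{44}=0$ (with the $g_{14}$ and nilpotent-$Rc$ remarks as equivalent repackagings). Nothing is missing.
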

We denote the Ricci operator and the scalar curvature by $Rc$ and $\tau$ respectively. Let $M_q^n$ be a pseudo-Riemannian manifold of index $q$. The Weyl conformal curvature
tensor field $C$ of type $(1,3)$ of $M$ is defined by 
\begin{equation}\label{CurvFromRic}
\begin{array}{cr}
C(X,Y)Z=R(X,Y)Z-(\frac{1}{n-2}(QX\wedge Y+X\wedge QY)Z
+\frac{\tau}{(n-1)(n-2)}
(X\wedge Y )Z,
\end{array}
\end{equation}
where $(X\wedge Y )Z=<Y,Z>X-<X,Z>Y$.
It is well-known \cite{BA1} that
for a conformally
flat space the curvature tensor can be completely determined using the Ricci tensor.
Moreover, if $n\geqslant 4$, then $M_q^n$ is conformally 
flat if and only if $C=0$.
\begin{prop} 
The oscillator group equipped with
biinvariant Lorentzian metric $g$ described in \eqref{meter} is conformally flat.
\end{prop}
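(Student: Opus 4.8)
The plan is to show directly that the Weyl tensor $C$ of \eqref{CurvFromRic} vanishes identically on the oscillator group; since $n=4\geqslant 4$, this is exactly the stated criterion for conformal flatness. The first step, which is what makes the computation short, is to record the scalar curvature. The Ricci operator $Rc$ of \eqref{Rc} (the operator written $Q$ in \eqref{CurvFromRic}) sends $Q\mapsto\tfrac12 P$ and annihilates $P,X_1,Y_1$; being strictly triangular in the basis $\lbrace P,X_1,Y_1,Q\rbrace$ it has vanishing trace, so $\tau=0$. Hence the last term of \eqref{CurvFromRic}, which carries the factor $\tau$, drops out and the Weyl tensor reduces, with $n-2=2$, to
\begin{equation*}
C(X,Y)Z = R(X,Y)Z - \tfrac{1}{2}\big(Rc(X)\wedge Y + X\wedge Rc(Y)\big)Z ,
\end{equation*}
where $(U\wedge V)Z=g(V,Z)U-g(U,Z)V$.

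Next I would evaluate this reduced expression on every triple of basis vectors. The structural point is that both the curvature (whose nonzero values are those listed above) and the correction term are supported only on configurations containing $Q$: indeed $Rc$ kills $P,X_1,Y_1$, so $Rc(X)\wedge Y + X\wedge Rc(Y)$ is nonzero only when $X$ or $Y$ equals $Q$, and then it lives in the span of $P\wedge X_1$ and $P\wedge Y_1$. This is precisely the span in which the nonzero curvature operators $R(\,\cdot\,,Q)$ and $R(Q,\,\cdot\,)$ take their values, which is why cancellation is even possible. Concretely, fixing a consistent sign convention for \eqref{curve} one finds for instance
\begin{equation*}
C(Q,X_1)X_1 = R(Q,X_1)X_1 - \tfrac{1}{4}(P\wedge X_1)X_1 = \tfrac14 P - \tfrac14 P = 0 ,
\end{equation*}
using $Rc(Q)=\tfrac12 P$ and $(P\wedge X_1)X_1 = g(X_1,X_1)P - g(P,X_1)X_1 = P$; and likewise, using $(P\wedge X_1)Q = g(X_1,Q)P - g(P,Q)X_1 = -X_1$, the correction term $-\tfrac14 X_1$ in $C(Q,X_1)Q$ cancels $R(Q,X_1)Q$.

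I would then run through the remaining cases: the $Y_1$–analogues of the two computations above, the partners obtained from the antisymmetry $R(X,Y)=-R(Y,X)$, and the triples built from the null vector $P$ (such as $C(P,Q)Q$ and $C(Q,Q)Z$), where both terms vanish separately because $P\wedge P=0$ and $Rc(Q)\wedge Q + Q\wedge Rc(Q)=0$. Finally the genuinely mixed triples, e.g. $C(X_1,Q)Y_1$, vanish because the relevant curvature, Ricci, and metric pairings are all zero. Once every basis triple gives $C(X,Y)Z=0$, we conclude $C\equiv 0$, and conformal flatness follows. I expect the one real obstacle to be purely organizational: because $P$ and $Q$ are null with $g(P,Q)=1$ while $g(X_1,X_1)=g(Y_1,Y_1)=1$, the wedge terms $(U\wedge V)Z$ must be expanded via these off-diagonal pairings rather than orthonormal intuition, and it is exactly this interplay that produces the cancellations; keeping the signs consistent between the curvature data and the Ricci correction is where care is needed.
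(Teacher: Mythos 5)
Your argument is correct and follows the same overall route as the paper's proof --- direct verification that the Weyl tensor of \eqref{CurvFromRic} vanishes --- but on the one substantive numerical point where you and the paper differ, you are right and the paper is wrong. The paper's proof asserts $\tau=\frac{1}{2}$, which one gets by summing the diagonal entries of \eqref{ric3} as if the basis $\lbrace P,X_1,Y_1,Q\rbrace$ were orthonormal; since the trace must be taken against the inverse of the Lorentzian metric \eqref{meter}, whose $(Q,Q)$-entry is zero, the correct value is $\tau=g^{ij}\rho_{ij}=0$, exactly as you obtain from the trace-freeness of the nilpotent Ricci operator \eqref{Rc}. This discrepancy is not cosmetic: with $\tau=\frac12$ the term $\frac{\tau}{(n-1)(n-2)}(X\wedge Y)Z=\frac{1}{12}(X\wedge Y)Z$ in \eqref{CurvFromRic} has nothing to cancel against on triples where both the curvature and the Ricci correction vanish --- for instance $C(X_1,Y_1)Y_1$ would come out to $\frac{1}{12}X_1\neq 0$ --- so the paper's ``straightforward calculation'' can only yield $C=0$ with your value $\tau=0$. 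Your structural observation, that the curvature operators and the Ricci correction are both supported on the span of $P\wedge X_1$ and $P\wedge Y_1$, is precisely why the full cancellation works, and it can be certified once and for all in $(0,4)$-form by checking $R_{abcd}=\frac12\bigl(g_{ac}\rho_{bd}-g_{ad}\rho_{bc}+g_{bd}\rho_{ac}-g_{bc}\rho_{ad}\bigr)$ on every basis quadruple, which indeed holds for this metric.

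One bookkeeping caution, which you partly flag yourself: the curvature values listed after \eqref{curve} and the Weyl formula \eqref{CurvFromRic} implicitly use opposite sign conventions for $R$ (taken at face value, the correction terms double the curvature instead of cancelling it), and your two displayed sample cancellations silently make opposite choices --- the first uses $R(Q,X_1)X_1=+\frac14 P$, the second uses $R(Q,X_1)Q=+\frac14 X_1$, and these cannot both hold under a single convention, since with $Rc(Q)=\frac12 P$ the corrections $-\frac14(P\wedge X_1)X_1=-\frac14 P$ and $-\frac14(P\wedge X_1)Q=+\frac14 X_1$ force the two curvature values to have opposite relative signs. Under any one consistent convention all the cancellations do go through (as the $(0,4)$-identity above shows), so this is a slip of sign attribution rather than a gap; but in a final write-up you should fix one convention for \eqref{curve} and recompute the listed values of $R$ and $Rc$ in that convention before substituting into \eqref{CurvFromRic}.
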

\begin{proof}
Since the scalar curvature is $\tau=\sum_i(\rho_i,\rho_i)$ (see \cite{be1}. p. 43), by \eqref{ric3}, $\tau=\frac{1}{2}$.
Using \eqref{CurvFromRic} and \eqref{Rc} a straightforward calculation then yields that $C=0$, as desired.
\end{proof}
A D' Atri space is defined as a Riemannian manifold $(M, g)$ whose local geodesic symmetries are volumepreserving. Let us recall that the property of being a D' Atri space is
equivalent to the infinite number of curvature identities called the odd Ledger conditions $L_{2k+1}$, $k\geq 1$ (see \cite{d11}, \cite{s1}). In particular, the two first non-trivial Ledger conditions are:
\begin{equation}
L_3: (\nabla_X \rho)(X,X)=0,\quad L_5: \sum_{a,b=1}^nR(X,E_a,X,E_b)(\nabla_X R)(X,E_a,X,E_b)=0,
\end{equation}
where $X$ is any tangent vector at any point $m\in M$ and $\lbrace E_1, . . . , E_n\rbrace$ is any orthonormal basis of $T_mM$. Here $R$ denotes
the curvature tensor and $\rho$ the Ricci tensor of $(M, g)$, respectively, and $n = dimM$.\\
Thus, it is natural to start with the investigation of the oscillator group satisfying the simplest Ledger condition $L_3$, which is the first approximation of the D' Atri property. This condition is called in \cite{p112} "the class
$\A$ condition". Equivalently Ledger condition $L_3$ holds if and only if
the Ricci tensor is cyclic-parallel, i.e.
\begin{center}
 $ (\nabla_X \rho)(Y,Z)+ (\nabla_Y \rho)(Z,X)+ (\nabla_Z \rho)(X,Y)=0$.
\end{center}
For more detail see \cite{Ca1}.
\begin{prop} 
The oscillator group equipped with
biinvariant Lorentzian metric described in \eqref{meter} is a D' Atri space which its first approximation holds.
\end{prop}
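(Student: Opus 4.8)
The plan is to verify the Ledger condition $L_3$ in the equivalent cyclic-parallel form stated just above the proposition, that is, to show that $(\nabla_X \rho)(Y,Z)+(\nabla_Y \rho)(Z,X)+(\nabla_Z \rho)(X,Y)=0$ for all $X,Y,Z$. First I would recall from \eqref{ric3} that, in the left-invariant frame $\lbrace P,X_1,Y_1,Q\rbrace$, the only nonzero component of $\rho$ is $\rho(Q,Q)=\frac{1}{2}$; equivalently $\rho(v,w)=\frac{1}{2}\,v^Q w^Q$, where $v^Q,w^Q$ denote the $Q$-components of $v,w$. Then I would expand each covariant derivative of $\rho$ through the identity $(\nabla_X \rho)(Y,Z)=X\,\rho(Y,Z)-\rho(\nabla_X Y,Z)-\rho(Y,\nabla_X Z)$, evaluated on the basis vectors; since the numbers $\rho(Y,Z)$ are constant in a left-invariant frame, the first term drops and only the two connection terms survive.

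The key step is the observation that none of the covariant derivatives $\nabla_X Y$ of basis vectors, as read off from the displayed formulas for $\nabla_P,\nabla_{X_1},\nabla_{Y_1},\nabla_Q$, has any component along $Q$: every such derivative lies in $\mathrm{span}\lbrace P,X_1,Y_1\rbrace$. Because $\rho$ pairs nontrivially only $Q$ with $Q$, this forces $\rho(\nabla_X Y,Z)=0$ and $\rho(Y,\nabla_X Z)=0$ for all basis choices, whence $(\nabla_X \rho)(Y,Z)=0$ identically. In other words $\rho$ is parallel, which also fits the fact noted in the introduction that $(G,g)$ is a Lorentzian symmetric space. The cyclic sum then vanishes a fortiori, so $L_3$ holds and the oscillator group is a first-approximation D'Atri space.

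The only real care needed, rather than a genuine obstacle, is bookkeeping: one must keep in mind that because $g$ in \eqref{meter} is off-diagonal, ``cyclic-parallel'' is a condition on the $(0,2)$-tensor $\rho$, so the pairings must be computed directly from the entries $\rho_{ij}$ of \eqref{ric3} and not through the Ricci operator $Rc$ of \eqref{Rc}. Given that $\rho$ has a single nonzero entry and that the connection produces no $Q$-component, checking the finitely many index combinations is immediate, and the stronger statement $\nabla\rho=0$ makes the cyclic identity automatic.
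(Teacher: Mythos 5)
Your proof is correct, and it takes a genuinely different --- and in fact stronger --- route than the paper's. The paper works with components $\nabla_i\rho_{jk}$ expressed through connection coefficients $B_{ijk}$ in an orthonormal frame, and then only exhibits the vanishing of the three diagonal quantities $\nabla_1\rho_{11}$, $\nabla_2\rho_{22}$, $\nabla_3\rho_{33}$ before asserting cyclic-parallelism; as written this is a partial check, since the cyclic condition involves all index combinations (equivalently, $(\nabla_X\rho)(X,X)=0$ for \emph{every} $X$, by polarization). You instead prove the full statement $\nabla\rho=0$ in the left-invariant frame $\lbrace P,X_1,Y_1,Q\rbrace$, via the structural observation that every covariant derivative of a left-invariant field --- here $\nabla_XY=\frac{1}{2}[X,Y]$, the metric being biinvariant --- lies in the derived algebra $\mathrm{span}\lbrace P,X_1,Y_1\rbrace$ and so has no $Q$-component, while by \eqref{ric3} the tensor $\rho$ pairs nontrivially only $Q$ with $Q$. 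Since $\nabla\rho$ is tensorial, its vanishing on a left-invariant frame gives $\nabla\rho\equiv 0$ everywhere, and the cyclic sum vanishes a fortiori. This buys more than the proposition asks: parallel Ricci is consistent with the Lorentzian symmetric structure noted in the introduction (indeed $\nabla R=0$ for a biinvariant metric, which makes geodesic symmetries isometries and hence yields the full D'Atri property, not just its first approximation), and your closing caution about computing with the $(0,2)$-tensor $\rho_{ij}$ of \eqref{ric3} rather than the operator $Rc$ of \eqref{Rc} is exactly the right bookkeeping point given the off-diagonal metric \eqref{meter}.
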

\begin{proof}
In Ledger condition $L_3$,
\begin{center}
$\nabla_i\rho_{jk}=-\sum_t(\varepsilon_jB_{ijt}\rho_{tk}+\varepsilon_k B_{ikt}\rho_{tj}),$
\end{center}
where  $B_{ijk}$ components can be obtained by the relation $\nabla_{e_i}e_j =\sum_k\varepsilon_j B_{ijk}e_k$ with $\varepsilon_i=g(e_i,e_i)=\pm1$ for all indices i.
But $\nabla_1\rho_{11}=\nabla_2\rho_{22}=\nabla_3\rho_{33}=0$, hence the Ricci tensor is cyclic-parallel and the first approximation of the D' Atri property holds.
\end{proof}
A pseudo-Riemannian manifold which admits a parallel degenerate distribution is called a {\em Walker} manifold. Walker spaces were introduced by Arthur Geoffrey Walker in 1949. The existence of such structures causes many interesting properties for the manifold with no Riemannian counterpart. Walker also determined a standard local coordinates for these kind of manifolds \cite{Wa1,Wa2}.
\begin{prop} 
Let $G$ be the oscillator group equipped with
a biinvariant Lorentzian metric $g$ described in \eqref{meter}, then $(G, g)$ admits invariant parallel degenerate line field $\D$ with the generator $\lbrace P \rbrace$.
\end{prop}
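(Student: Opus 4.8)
The plan is to verify the three defining attributes of the asserted structure in turn: that $\D=\mathrm{span}\{P\}$ is a line field generated by an invariant vector field, that it is \emph{degenerate} with respect to $g$, and that it is \emph{parallel} for the Levi-Civita connection. Invariance is immediate, since $P$ is one of the left-invariant basis vectors of $\g$, so the one-dimensional distribution it spans is $G$-invariant by construction.

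For degeneracy I would simply read off the metric \eqref{meter}: the $(P,P)$-entry is $g(P,P)=0$, so $P$ is a null vector and the restriction of $g$ to $\D$ is degenerate. Equivalently $P\in\D\cap\D^{\perp}$, which is exactly the condition making the line field degenerate in the sense required for a Walker structure.

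The substantive step is parallelism. To test whether $\D$ is parallel one must differentiate its generator in every direction and check that the result again lies in $\D$. I would do this by specializing the connection formulas already recorded above to $X=P$, i.e. setting $a=1$ and $b=c=d=0$ in the expressions for $\nabla_{P}X,\ \nabla_{X_1}X,\ \nabla_{Y_1}X,\ \nabla_{Q}X$. Each of these collapses to zero, giving $\nabla_{V}P=0$ for every basis vector $V\in\lbrace P,X_1,Y_1,Q\rbrace$, hence for every $V\in\g$. The conceptual reason is that $g$ is bi-invariant, so $\nabla_{V}W=\tfrac12[V,W]$, while the brackets \eqref{lie8} show that $P$ is central; thus $\nabla_{V}P=\tfrac12[V,P]=0$. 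It follows that $P$ is in fact a parallel null vector field, and for any section $fP$ of $\D$ one has $\nabla_{V}(fP)=(Vf)\,P\in\D$, so $\D$ is parallel.

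I do not expect a genuine obstacle here, since the only data needed are the connection components already computed; the single point requiring care is logical rather than computational, namely recognizing that parallelism of the line field is tested by differentiating the generator $P$ in all directions (and not by differentiating arbitrary vector fields into $P$), and that the table above supplies precisely this information upon setting $X=P$. Combining the three verifications shows that $(G,g)$ carries an invariant parallel degenerate line field spanned by $P$, which is the statement; in particular $(G,g)$ is a Walker manifold.
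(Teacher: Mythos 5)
Your proof is correct, but it takes the opposite route from the paper's. You verify \emph{directly} that the specific line field $\D=\mathrm{span}\{P\}$ has the three required properties: invariance (trivial, $P$ being a left-invariant basis vector), degeneracy ($g(P,P)=0$ by \eqref{meter}), and parallelism, obtained either by setting $a=1$, $b=c=d=0$ in the table of covariant derivatives or, more conceptually, from the bi-invariant identity $\nabla_VW=\tfrac12[V,W]$ together with the centrality of $P$ in \eqref{lie8}; since $\nabla_VP=0$, any section $fP$ satisfies $\nabla_V(fP)=(Vf)P\in\D$, so $\D$ is parallel. The paper instead runs a classification: it takes a general $X=aP+bX_1+cY_1+dQ$, imposes $\nabla_{P}X=\omega_1X,\dots,\nabla_{Q}X=\omega_4X$ together with the nullity condition $2ad+b^2+c^2=0$, and solves the resulting system of sixteen equations to conclude $X=aP$. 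Your argument is shorter and proves exactly what the proposition asserts (existence), and the observation that $\nabla_VP=\tfrac12[V,P]=0$ because $P$ is central is cleaner than any recomputation; the paper's system-solving buys more, namely \emph{uniqueness} --- $\mathrm{span}\{P\}$ is the \emph{only} invariant parallel degenerate line field --- which is stronger than the literal statement but arguably intended by the phrasing ``with the generator $\{P\}$''. Two further points in your favor: you correctly note that parallelism of a line field only requires $\nabla_VX=\omega X$ (not $\nabla_VX=0$), a distinction the paper's ansatz also respects, and your conclusion that $P$ is genuinely parallel is consistent with Section 4 of the paper, where $u=e_2-e_4$ is shown to be a parallel null vector field: indeed $u=(X_1+Q)-(-P+X_1+Q)=P$.
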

\begin{proof}
Set $X =aP+bX_1+cY_1+dQ\in \g$ and suppose that $\D={\rm span}(X)$ is an invariant null parallel line field. Then, the following equations must satisfy for some parameters $\omega_1,\dots,\omega_4$
$$
\begin{array}{llll}
\nabla_{P}X=\omega_1X,&\nabla_{X_1}X=\omega_2X,&\nabla_{Y_1}X=\omega_3X,&\nabla_{Q}X=\omega_4X.
\end{array}
$$
By straight forward calculations we conclude that the following equations must satisfy
$$
\begin{array}{l}
\omega_1a=0,\quad \omega_1b=0,\quad\omega_1c=0,\quad \omega_1d=0,\\
\omega_2b=0,\quad \omega_2d=0,\quad
 -\omega_2a+\frac{1}{2}c=0,\quad -\omega_2c+\frac{1}{2}d=0,\\
\omega_3c=0,\quad \omega_3d=0,\quad
 -\omega_3a-\frac{1}{2}b=0,\quad -\omega_3b+\frac{1}{2}d=0,\\
\omega_4a=0,\quad \omega_4d=0,\quad
 -\omega_4b-\frac{1}{2}c=0,\quad -\omega_4c+\frac{1}{2}b=0.
\end{array}
$$
$X$ is null, hence $X$ must satisfy $g(X,X)=2ad+b^2+c^2=0$ described in \eqref{meter}. 
By solving the above system of equations we obtain that $X=a P$. It means that $b=c=d=0$. 
\end{proof}

\section{Harmonicity of vector fields on oscillator group}
In this section we investigate the harmonicity of invariant vector fields on the oscillator group equipped with biinvariant Lorentzian metric $g$ described in \eqref{meter}.\\
We can construct an orthonormal frame field $\lbrace e_1,e_2,e_3,e_4\rbrace$ with respect to $g$;
\begin{center}
$e_1=-P+X_1,\quad e_2=X_1+Q,\quad e_3=Y_1,\quad e_4=-P+X_1+Q,$
\end{center}
with $e_1,e_2,e_3$ space-like and $e4$ time-like. We get;
\begin{eqnarray}
\begin{array}{ccc}
 [e_1,e_2]=-e_3,&\quad [e_1,e_3]=e_2-e_4,&\quad [e_1,e_4]=-e_3,\\
 
 [e_2,e_3]=-e_1,&\quad  [e_3,e_4]=e_1.&
 \end{array}
 \end{eqnarray}
Considering formula \eqref{con} the connection components are;
\begin{eqnarray}\label{con1}
\begin{array}{ccc}
\nabla_{e_1}e_2=-\frac{1}{2}e_3,&\quad \nabla_{e_1}e_3=\frac{1}{2}e_2-\frac{1}{2}e_4,&\quad \nabla_{e_1}e_4=-\frac{1}{2}e_3,\\
\nabla_{e_2}e_1=-\frac{1}{2}e_3,&\quad \nabla_{e_2}e_3=-\frac{1}{2}e_1,&\\
\nabla_{e_3}e_1=-\frac{1}{2}e_2+\frac{1}{2}e_4,&\quad \nabla_{e_3}e_2=\frac{1}{2}e_1,&\quad \nabla_{e_3}e_4=\frac{1}{2}e_1,\\
\nabla_{e_4}e_1=\frac{1}{2}e_3,&\quad \nabla_{e_4}e_3=-\frac{1}{2}e_1.&
\end{array}
\end{eqnarray}
while $\nabla_{e_i}e_j=0$ in the remaining cases.\\
Set $u=e_2-e_4$. Then, from \eqref{con1} we get $\nabla_{e_i}u=0$ for all indices i. Therefore, $u$ is a parallel light-like vector field. The existence of a light-like parallel vector field is an interesting phenomenon which has no Riemannian counterpart, and characterizes a class of pseudo-Riemannian manifolds which illustrate many of differences between Riemannian and pseudo-Riemannian settings (see for example \cite{ch1},\cite{ch2}).\\
For an arbitrary left-invariant vector field $V =ae_1+be_2+ce_3+de_4 \in \g$ we can now use \eqref{con1}  to calculate $\nabla_{e_i}V$ for all indices i. We get
\begin{eqnarray}\label{con2}
\begin{array}{cc}
\nabla_{e_1}V=-\frac{1}{2}(b+d)e_3+\frac{1}{2}cu,&\quad \nabla_{e_2}V=-\frac{1}{2}ce_1+\frac{1}{2}ae_3,\\
\nabla_{e_3}V=\frac{1}{2}(b+d)e_1-\frac{1}{2}au,&\quad \nabla_{e_4}V=-\frac{1}{2}ce_1+\frac{1}{2}ae_3.
\end{array}
\end{eqnarray}
where the special role of $u=e_2-e_4$ is clear. 
We can now calculate $\nabla_{e_i}\nabla_{e_i}V$  for all indices i. We obtain
\begin{eqnarray}
\begin{array}{cc}\label{con3}
\nabla_{e_1}\nabla_{e_1}V=-\frac{1}{4}(b+d)u,& \nabla_{e_2}\nabla_{e_2}V=-\frac{1}{4}(ae_1+ce_3),\\
\nabla_{e_3}\nabla_{e_3}V=-\frac{1}{4}(b+d)u,& \nabla_{e_4}\nabla_{e_4}V=-\frac{1}{4}(ae_1+ce_3).
\end{array}
\end{eqnarray}
And for $\nabla_{\nabla_{e_i}e_i}V$ for all indices i
\begin{eqnarray}
\begin{array}{cr}\label{con3}
\nabla_{\nabla_{e_1}e_1}V=\nabla_{\nabla_{e_2}e_2}V=\nabla_{\nabla_{e_3}e_3}V=
\nabla_{\nabla_{e_4}e_4}V=0.
\end{array}
\end{eqnarray}
Thus, we find
\begin{center}
$ \nabla^*\nabla V=\sum_i \varepsilon_i( \nabla_{e_i}\nabla_{e_i} V-\nabla_{\nabla_{e_i}e_i}V)=-\frac{1}{2}(b+d)u$.   
\end{center}
If $b=-d$ then $\nabla^*\nabla V=0$.
In the other direction, let $V=ae_1+bu+ce_3 \in \g$. A direct calculation
yields that $\nabla^*\nabla V=0$.

Now, using \eqref{curve} and  \eqref{con3},
we find 
\begin{center} 
$R(\nabla_{e_1} V,V)e_1=0,\quad  R(\nabla_{e_2} V,V)e_2=\dfrac{1}{8}(b+d)(ce_1-ae_3),$\\
$R(\nabla_{e_3} V,V)e_3=0,\quad  R(\nabla_{e_4} V,V)e_4=\dfrac{1}{8}(b+d)(ce_1-ae_3).$
\end{center}
Therefore
\begin{center} 
$tr[R(\nabla_. V,V)_.]=\sum_{i} \varepsilon_i R(\nabla_{e_i} V,V)e_i=0, $
\end{center}
with $\varepsilon_i=g(e_i,e_i)=\pm1$ for all indices i.
Thus, we have the following.
\begin{theorem}\label{hor4}
Let $G$ be the oscillator group equipped with biinvariant Lorentzian metric $g$ described in \eqref{meter} and  $V=ae_1+be_2+ce_3+de_4\in \g$ be a left-invariant vector field on $G$ for some real constants $a,b,c,d$, then   the
following conditions are equivalent:
 \begin{itemize}
\item[$(1)$] $V$ defines a harmonic map;
\item[$(2)$] $V$ is harmonic;
\item[$(3)$] $V$ is a critical point for the energy functional restricted to vector fields of the same length;
\item[$(4)$] $V=ae_1+bu+ce_3$, that is, $b=-d$.
\end{itemize}
\end{theorem}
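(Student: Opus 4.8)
The plan is to reduce each of the three harmonicity conditions to an explicit algebraic constraint on the coefficients $a,b,c,d$, relying on the two quantities already obtained in the computation preceding the statement, namely $\nabla^*\nabla V=-\frac{1}{2}(b+d)u$ and $\mathrm{tr}[R(\nabla_\cdot V,V)_\cdot]=0$. Since $u=e_2-e_4$ is nowhere vanishing, the field $\nabla^*\nabla V$ is zero exactly when $b+d=0$, and this single observation drives the whole argument.

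First I would establish the equivalence $(1)\Leftrightarrow(2)$. By the harmonic-map criterion \eqref{hor}, the vector field $V$ defines a harmonic map precisely when both $\mathrm{tr}[R(\nabla_\cdot V,V)_\cdot]=0$ and $\nabla^*\nabla V=0$ hold. The first of these is satisfied identically by the calculation above, so the harmonic-map condition collapses to $\nabla^*\nabla V=0$, which is exactly the Euler--Lagrange equation defining a harmonic section, i.e.\ condition $(2)$. Next, the equivalence $(2)\Leftrightarrow(4)$ is immediate: because $u\neq0$, the equation $\nabla^*\nabla V=-\frac{1}{2}(b+d)u=0$ holds if and only if $b+d=0$, and in that case $V=ae_1+be_2+ce_3-be_4=ae_1+bu+ce_3$, which is precisely the form asserted in $(4)$.

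It remains to prove $(3)\Leftrightarrow(4)$. The implication $(4)\Rightarrow(3)$ is trivial, since under $(4)$ one has $\nabla^*\nabla V=0$, which is trivially collinear to $V$. For the converse I would impose the criterion \eqref{hor1}, writing $\nabla^*\nabla V=fV$ for some scalar $f$ and comparing components in the orthonormal frame $\lbrace e_1,e_2,e_3,e_4\rbrace$. The $e_2$- and $e_4$-components of $-\frac{1}{2}(b+d)u=fV$ give $fb=-\frac{1}{2}(b+d)$ and $fd=\frac{1}{2}(b+d)$; adding these yields $f(b+d)=0$. If $f=0$ then $b+d=0$ at once, while if $f\neq0$ then again $b+d=0$. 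In either case $b=-d$, which is $(4)$.

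The argument is essentially bookkeeping once the two displayed quantities are in hand, and the only place demanding a little care is the final case distinction on the scalar $f$; since both alternatives force $b+d=0$, there is no genuine obstacle. The structural point worth emphasising is that the identical vanishing of the curvature trace term is exactly what makes conditions $(1)$ and $(2)$ coincide here, whereas in general the harmonic-map condition is strictly stronger than the harmonic-section condition.
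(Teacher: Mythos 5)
Your proposal is correct and follows essentially the same route as the paper, whose proof of this theorem is precisely the computations displayed just before the statement: $\nabla^*\nabla V=-\frac{1}{2}(b+d)u$ with $u=e_2-e_4$ nowhere zero, and the identical vanishing of $\mathrm{tr}[R(\nabla_\cdot V,V)_\cdot]$ coming from the cancellation of the $e_2$ and $e_4$ contributions via $\varepsilon_2=1$, $\varepsilon_4=-1$. Your explicit case analysis on the scalar $f$ for the collinearity condition $(3)$ is a small but welcome completion of a step the paper leaves implicit.
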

Therefore, left-invariant vector fields defining a harmonic map form a three-parameter family. As $||ae_1+bu+ce_3||^2=a^2+c^2$ such vector fields are either space-like or light-like.\\
A vector field $V$ is geodesic if $\nabla_VV =0$, and is Killing if $\mathcal{L}_V g=0$, where $\mathcal{L}$ denotes
the Lie derivative. Parallel vector fields are both geodesic and Killing, and vector fields with these special geometric features often have particular harmonicity properties \cite{a2,g11,g2,h1}. By standard calculations we obtain the following result.
\begin{prop} \label{pro1} 
Let $G$ be the oscillator group equipped with biinvariant Lorentzian metric $g$ described in \eqref{meter} and  $V\in \g$ be a left-invariant vector field on $G$, then $V$ is geodesic. Moreover, using \eqref{kil}, we see that $V$ is Killing too.
\end{prop}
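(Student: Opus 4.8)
The plan is to treat the two assertions separately, since the Killing property is already essentially in hand. For \emph{Killing}, I would simply invoke \eqref{kil}: because $g$ is biinvariant, every left-invariant vector field $X\in\g$ satisfies $\mathcal{L}_X g=0$, which is precisely the Killing condition. No further computation is needed for this half of the statement, so the entire content of the proposition lies in the geodesic claim.

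For the \emph{geodesic} property I would write $V=ae_1+be_2+ce_3+de_4$ in the orthonormal frame and use linearity of the connection in its first slot to expand
\[
\nabla_V V = a\,\nabla_{e_1}V + b\,\nabla_{e_2}V + c\,\nabla_{e_3}V + d\,\nabla_{e_4}V .
\]
I would then substitute the four expressions for $\nabla_{e_i}V$ recorded in \eqref{con2} and collect terms. Since each $\nabla_{e_i}V$ lies in $\mathrm{span}\{e_1,e_3,u\}$ (with $u=e_2-e_4$), it suffices to check that the $e_1$-, $e_3$-, and $u$-coefficients of the sum each vanish. The $e_1$-coefficient collects $-\tfrac12 bc+\tfrac12 c(b+d)-\tfrac12 cd$, the $e_3$-coefficient collects $-\tfrac12 a(b+d)+\tfrac12 ab+\tfrac12 ad$, and the $u$-coefficient collects $\tfrac12 ac-\tfrac12 ac$; all three reduce to $0$, giving $\nabla_V V=0$.

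Conceptually, this cancellation is not accidental, and I would flag the structural reason behind it. For a biinvariant metric the Levi-Civita connection of left-invariant fields is $\nabla_X Y=\tfrac12[X,Y]$; indeed, one checks directly that the components listed in \eqref{con1} are exactly one half of the corresponding brackets of the frame. Hence $\nabla_V V=\tfrac12[V,V]=0$ by antisymmetry of the bracket, and the explicit expansion via \eqref{con2} serves only as a concrete verification of this one-line argument. There is accordingly no genuine obstacle here: both conclusions follow either from the already-established identity \eqref{kil} or from the antisymmetry of the Lie bracket, and the only thing to present carefully is the bookkeeping of the coefficient cancellations.
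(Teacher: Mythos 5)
Your proof is correct and takes essentially the same route as the paper, which disposes of the Killing claim by citing \eqref{kil} and asserts the geodesic claim ``by standard calculations'' -- your expansion of $\nabla_V V$ via \eqref{con2} together with the biinvariance identity $\nabla_X Y=\frac{1}{2}[X,Y]$ (whence $\nabla_V V=\frac{1}{2}[V,V]=0$) supplies exactly those calculations. One small caveat: your remark that the components in \eqref{con1} are ``exactly one half of the corresponding brackets'' fails for the single printed entry $\nabla_{e_2}e_1=-\frac{1}{2}e_3$, which is a sign typo in the paper (biinvariance gives $\frac{1}{2}[e_2,e_1]=+\frac{1}{2}e_3$); the subsequent formula \eqref{con2}, on which your coefficient check relies, is consistent with the corrected value, so your argument is unaffected.
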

Also, with regard to harmonicity properties of invariant vector fields, the oscillator groups display
some particular features. The main geometrical reasons for the special behaviour
of these groups are the existence of a parallel light-like vector field.\\
 Using Proposition \ref{pro1} and Theorem \ref{hor4} a straightforward calculation proves the following classification result, which emphasizes once again the special role played
by the parallel vector field $u$.
\begin{theorem}\label{hor4}
Let $V=ae_1+be_2+ce_3+de_4\in \g$ be a left-invariant vector field on the oscillator group, then   the
following conditions are equivalent:
 \begin{itemize}
\item[$(1)$] $V$ is geodesic;
\item[$(2)$] $V$ is Killing;
\item[$(3)$] $V$ is parallel if and only if $a=c=b-d=0$, that is, $V$ is collinear to $u$.
\end{itemize}
\end{theorem}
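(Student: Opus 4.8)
The plan is to handle the three items in turn, noting that the real content is the parallelism criterion in $(3)$: the geodesic and Killing properties $(1)$ and $(2)$ hold for \emph{every} left-invariant vector field, so they impose no restriction on the coefficients $a,b,c,d$ and are already available.

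For $(1)$ and $(2)$ I would simply invoke Proposition \ref{pro1}. The underlying reason is bi-invariance of $g$: the symmetric term $v$ in \eqref{con} vanishes, so $\nabla_X Y=\tfrac{1}{2}[X,Y]$ for all left-invariant $X,Y$; taking $X=Y=V$ gives $\nabla_V V=\tfrac{1}{2}[V,V]=0$, whence $V$ is geodesic, while $\mathcal{L}_V g=0$ is exactly \eqref{kil}, whence $V$ is Killing. Neither property constrains $V$ further, so both hold for an arbitrary $V=ae_1+be_2+ce_3+de_4$.

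The substantive step is $(3)$, and here I would read the condition off \eqref{con2} directly. Requiring $V$ to be parallel means $\nabla_{e_i}V=0$ for every $i$. The two coinciding equations $\nabla_{e_2}V=\nabla_{e_4}V=-\tfrac{1}{2}c\,e_1+\tfrac{1}{2}a\,e_3=0$ force $a=c=0$ at once. Substituting this into $\nabla_{e_1}V=-\tfrac{1}{2}(b+d)e_3+\tfrac{1}{2}c\,u$ and $\nabla_{e_3}V=\tfrac{1}{2}(b+d)e_1-\tfrac{1}{2}a\,u$ kills the $u$-terms and leaves only $b+d=0$. Hence $V=b\,e_2-b\,e_4=b\,u$, that is, $V$ is collinear to the parallel light-like field $u=e_2-e_4$. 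The converse requires nothing new, since the text already verifies $\nabla_{e_i}u=0$, so every scalar multiple of $u$ is parallel.

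I expect no genuine obstacle: the computation is just the linear algebra of the $4\times 4$ system coming from \eqref{con2}. The only point to watch is bookkeeping, namely that parallelism pins down $a=c=0$ together with $b+d=0$ (equivalently collinearity with $u=e_2-e_4$), so that the parallel left-invariant fields form precisely the line $\R u$ singled out earlier as the parallel light-like direction. This is also the structural reason $u$ governs the harmonicity phenomena discussed above: it is, up to scale, the unique parallel left-invariant vector field on $G$.
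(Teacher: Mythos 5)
Your proposal is correct and takes essentially the same route as the paper, which likewise handles $(1)$ and $(2)$ by invoking Proposition \ref{pro1} (geodesic and Killing hold for every left-invariant field, by bi-invariance) and settles $(3)$ by the ``straightforward calculation'' you carry out explicitly from \eqref{con2}, with the converse supplied by the already-verified fact that $\nabla_{e_i}u=0$ for all $i$. Note that your condition $a=c=b+d=0$ silently corrects a sign typo in the theorem's statement (which reads $b-d=0$) and is the right one, since collinearity with $u=e_2-e_4$ means precisely $b+d=0$.
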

\section{The energy of vector fields on oscillator group}
We calculate explicitly the energy of a vector field $V\in\g$ of on the oscillator group. This
gives us the opportunity to determine some critical values of the energy functional on the oscillator group. We shall first discuss geometric properties of the map $V$ defined by a
vector field  $V\in \g$.
\begin{prop}\label{E1}
Let $G$ be the oscillator group, $V=ae_1+be_2+ce_3+de_4\in \su(2)$ be a left-invariant vector field on the oscillator group for some real constants $a,b,c,d$. Denote by $\D$ a relatively compact domain of $G$ and by $E_{\D}(V)$ the energy of $V|_{\D}$. The energy
of $V$ is; 
\begin{center}
 $E_{\D}(V)=(2+\frac{1}{4}(b+d)^2)vol(\D)$.  
\end{center}
\end{prop}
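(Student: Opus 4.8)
The plan is to evaluate the energy formula \eqref{enr} directly, exploiting that $V$ is left-invariant so that $\|\nabla V\|^2$ is constant along $G$. Since $\dim G = 4$, the first term of \eqref{enr} contributes $\tfrac{n}{2}\,\mathrm{vol}(\D) = 2\,\mathrm{vol}(\D)$, and because the integrand $\|\nabla V\|^2$ is a constant function, the second term reduces to $\tfrac12\|\nabla V\|^2\,\mathrm{vol}(\D)$. Thus the whole computation collapses to finding the single number $\|\nabla V\|^2$.

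First I would recall that $\nabla V$ is a $(1,1)$-tensor, so in the Lorentzian setting its Hilbert--Schmidt norm with respect to the orthonormal frame $\{e_1,e_2,e_3,e_4\}$ (with $\varepsilon_1=\varepsilon_2=\varepsilon_3=1$ and $\varepsilon_4=-1$) is $\|\nabla V\|^2=\sum_i \varepsilon_i\,g(\nabla_{e_i}V,\nabla_{e_i}V)$; keeping track of the signs $\varepsilon_i$ is the one place where care is needed. I would then substitute the covariant derivatives computed in \eqref{con2}, together with the inner products read off from \eqref{meter}: the vector $u=e_2-e_4$ is light-like, so $g(u,u)=0$, and it is orthogonal to $e_1$ and $e_3$, while $g(e_1,e_3)=0$. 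These relations make every term containing $u$ drop out of $g(\nabla_{e_1}V,\nabla_{e_1}V)$ and $g(\nabla_{e_3}V,\nabla_{e_3}V)$, each of which therefore equals $\tfrac14(b+d)^2$.

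The key observation is the cancellation between the space-like and time-like directions: from \eqref{con2} one has $\nabla_{e_2}V=\nabla_{e_4}V=-\tfrac12 c\,e_1+\tfrac12 a\,e_3$, so $g(\nabla_{e_2}V,\nabla_{e_2}V)=g(\nabla_{e_4}V,\nabla_{e_4}V)$, and since $\varepsilon_2=1$ but $\varepsilon_4=-1$ their contributions to $\|\nabla V\|^2$ exactly annihilate one another. What survives is the sum over $e_1$ and $e_3$, giving $\|\nabla V\|^2=\tfrac14(b+d)^2+\tfrac14(b+d)^2=\tfrac12(b+d)^2$. Inserting this into the reduced energy formula yields $E_{\D}(V)=2\,\mathrm{vol}(\D)+\tfrac12\cdot\tfrac12(b+d)^2\,\mathrm{vol}(\D)=\bigl(2+\tfrac14(b+d)^2\bigr)\mathrm{vol}(\D)$, as claimed. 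I expect the only genuine obstacle to be the correct sign bookkeeping in the pseudo-Riemannian norm and the recognition that the null directions force the $a$ and $c$ terms to cancel; the remaining algebra is routine.
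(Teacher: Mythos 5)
Your proposal is correct and takes essentially the same approach as the paper: both evaluate \eqref{enr} with $n=4$ and compute $\|\nabla V\|^2=\sum_i\varepsilon_i\,g(\nabla_{e_i}V,\nabla_{e_i}V)=\tfrac12(b+d)^2$ from \eqref{con2} in the orthonormal frame. Your explicit tracking of the null vector $u$ and of the cancellation between the $\varepsilon_2$ and $\varepsilon_4$ contributions merely fills in the details the paper compresses into ``\eqref{con2} easily yields.''
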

\begin{proof}
Let $G$ be the oscillator group. Consider a local orthonormal
basis $\lbrace e_1,e_2,e_3,e_4\rbrace$ of vector fields. Then, locally,
\begin{center}
$||\nabla V||^2=\sum_{i=1}^{n}\varepsilon_i g(\nabla_{e_i}V,\nabla_{e_i}V),$
\end{center}
with $\varepsilon_i=g(e_i,e_i)=\pm1$ for all indices i. Let $V \in \g$ be a left-invariant vector field on the oscillator group, then \eqref{con2} easily yields
\begin{center}
$||\nabla V||^2=\frac{1}{2}(b+d)^2.$
\end{center}
Therefore, $||\nabla V|| = 0$ if and only if $b =- d$. Thus, among vector fields of the same length, the ones with $b =- d$ will
minimize the energy.
\end{proof}
 We already know from Theorem \ref{hor4} which vector fields in $\g$ on the oscillator group
are critical points for
the energy functional. Taking into account Proposition \eqref{E1}, we then have the following. 
\begin{theorem}
Let $G$ be the oscillator group, then 
 $2vol(\D)$ is the absolute minimum value of the energy functional $E_{\D}$. Such a minimum is attained by all vector fields $V=ae_2+bu+ce_3\in \g$.
\end{theorem}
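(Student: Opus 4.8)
The plan is to combine the explicit energy formula from Proposition \ref{E1} with the harmonicity classification from Theorem \ref{hor4}. By Proposition \ref{E1}, for any left-invariant vector field $V=ae_1+be_2+ce_3+de_4\in\g$, the energy over a relatively compact domain $\D$ is
\begin{center}
$E_{\D}(V)=\left(2+\tfrac{1}{4}(b+d)^2\right)vol(\D)$.
\end{center}
Since $(b+d)^2\geq 0$ always, the quantity in parentheses is bounded below by $2$, and therefore $E_{\D}(V)\geq 2\,vol(\D)$ for every $V\in\g$. This immediately identifies $2\,vol(\D)$ as a lower bound for the energy functional on left-invariant vector fields.

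Next I would check that this lower bound is actually attained, so that it is a genuine minimum rather than merely an infimum. Equality $E_{\D}(V)=2\,vol(\D)$ holds precisely when $(b+d)^2=0$, that is, when $b=-d$. First I would exhibit that the bound is achieved: any $V$ with $b=-d$ realizes the value $2\,vol(\D)$, so the minimum is genuinely attained. Then I would describe the minimizing set. Writing $u=e_2-e_4$ as in the preliminaries, the condition $b=-d$ means $V=ae_1+be_2+ce_3-be_4=ae_1+b(e_2-e_4)+ce_3=ae_1+bu+ce_3$, so the minimizers form exactly the three-parameter family $\{ae_1+bu+ce_3\}$, which by Theorem \ref{hor4} coincides with the vector fields defining a harmonic map.

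For the claim that this is the \emph{absolute} minimum, I would note that the energy formula is exact (not merely an inequality arising from an estimate), so the comparison with $2\,vol(\D)$ is sharp and no left-invariant vector field can do better. The main subtlety to watch is purely one of correct bookkeeping in the statement: the theorem as written lists the minimizers as $V=ae_2+bu+ce_3$, whereas the energy computation and Theorem \ref{hor4} give $V=ae_1+bu+ce_3$. I would present the minimizing family as $ae_1+bu+ce_3$ to keep it consistent with Proposition \ref{E1} and the harmonicity classification, treating the $e_2$ in the displayed statement as a typographical slip. No hard analytic obstacle arises here; the only real content is that the explicit formula of Proposition \ref{E1} reduces the extremal problem to minimizing the single nonnegative term $\tfrac{1}{4}(b+d)^2$, and the identification of the minimizing set with the harmonic maps then follows directly from Theorem \ref{hor4}.
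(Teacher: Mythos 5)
Your proposal is correct and follows essentially the same route as the paper, which likewise obtains the theorem by combining the exact energy formula of Proposition \ref{E1} (so that minimizing reduces to minimizing the nonnegative term $\tfrac{1}{4}(b+d)^2$, attained exactly when $b=-d$) with the classification of Theorem \ref{hor4}. Your observation that the displayed minimizers should read $V=ae_1+bu+ce_3$ rather than $ae_2+bu+ce_3$ is also right: that is a typographical slip in the paper's statement, consistent with its own Proposition \ref{E1} and Theorem \ref{hor4}.
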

{\bf Acknowledgements}
The author wishes to express his sincere gratitude toward the professor Anna Fino for her valuable
remarks and comments.

\end{document}